\documentclass[11pt]{amsart}  
\usepackage{amssymb,amsmath,amsthm,amscd}

\usepackage{graphicx}
\usepackage{lscape}
\DeclareGraphicsRule{.wmf}{bmp}{}{}

\addtolength{\textwidth}{100pt} \addtolength{\textheight}{75pt}
\addtolength{\hoffset}{-50pt} \addtolength{\voffset}{-25pt}


\numberwithin{equation}{section}

\newtheoremstyle{fancy1}{10pt}{10pt}{\itshape}{12pt}{\textsc\bgroup}{.\egroup}{8pt}{
}
\newtheoremstyle{fancy2}{10pt}{10pt}{}{12pt}{\itshape}{.}{8pt}{ }

\theoremstyle{fancy1}

\newtheorem{lem}[equation]{Lemma}

\newtheorem*{thm*}{Theorem}

\newtheorem*{main*}{Theorem}

\newtheorem*{cor*}{Corollary}
\newtheorem*{prop*}{Proposition}

\newtheorem*{problem*}{Problem}

\setcounter{table}{\value{equation}}

\theoremstyle{fancy2}

\newtheorem*{rems*}{Remarks}

\newtheorem*{rem*}{Remark}

\newtheorem*{example*}{Example}


\newcommand{\cref}[1]{Corollary~\ref{#1}}

\newcommand{\lref}[1]{Lemma~\ref{#1}}







\newcommand{\CP}{\mathbb{C\mkern1mu P}}

\newcommand{\Sph}{\mathbb{S}}
\newcommand{\Disc}{\mathbb{D}}



\newcommand{\R}{{\mathbb{R}}}
\newcommand{\Z}{{\mathbb{Z}}}

\newcommand{\QH}{{\mathbb{H}}}



\newcommand{\G}{\ensuremath{\operatorname{G}}}

\newcommand{\Sp}{\ensuremath{\operatorname{Sp}}}

\newcommand{\SU}{\ensuremath{\operatorname{SU}}}

\renewcommand{\S}{\ensuremath{\operatorname{S}}}

\newcommand{\K}{\ensuremath{\operatorname{K}}}




\newcommand{\fg}{{\mathfrak{g}}}
\newcommand{\fk}{{\mathfrak{k}}}
\newcommand{\fh}{{\mathfrak{h}}}
\newcommand{\fm}{{\mathfrak{m}}}
\newcommand{\fn}{{\mathfrak{n}}}

\newcommand{\fp}{{\mathfrak{p}}}



\def\con#1=#2(#3){#1 \equiv #2 \bmod{#3}}

\newcommand{\ml}{\langle}                     
\newcommand{\mr}{\rangle}                    





\newcommand{\diag}{\ensuremath{\operatorname{diag}}}

\newcommand{\Ad}{\ensuremath{\operatorname{Ad}}}

\DeclareMathOperator{\Id}{Id}

\DeclareMathOperator{\spam}{span}





\newcommand{\Kpm}{K^{\scriptscriptstyle{\pm}}}
\newcommand{\Kp}{K^{\scriptscriptstyle{+}}}
\newcommand{\Km}{K^{\scriptscriptstyle{-}}}




\newcommand{\no}{\noindent}

\newcommand{\coo}{{cohomogeneity one }}
\newcommand{\com}{{cohomogeneity one manifold }}
\newcommand{\coms}{{cohomogeneity one manifolds }}
\newcommand{\coa}{{cohomogeneity one action }}
\newcommand{\coas}{{cohomogeneity one actions }}

\begin{document}

\title{Seven dimensional  cohomogeneity one manifolds with nonnegative curvature}

\author{Luigi Verdiani}
\address{University of Firenze}
\email{verdiani@math.unifi.it}
\author{Wolfgang Ziller}
\address{University of Pennsylvania}
\email{wziller@math.upenn.edu}
\thanks{ The first named author was supported by the University of Pennsylvania
 and would like to thank the Institute for its
hospitality.   The second named author was supported by a grant from
the National Science Foundation.}

\maketitle

\begin{abstract}\noindent
We show that a certain family of cohomogeneity one manifolds does not admit an invariant metric of nonnegative sectional curvature, unless it admits one with positive curvature. As a consequence, the classification of nonnegatively curved
cohomogeneity one manifolds in dimension 7 is reduced to only one
further family of candidates
\end{abstract}

\bigskip

A group action is called a \coa if its generic orbits are hypersurfaces. Such actions have been used frequently to construct examples of various types, Einstein metrics, soliton metrics, metrics with positive or non-negatively curvature, and metrics with special holonomy, see e.g.  \cite{DW, FH, GKS, GZ,GVZ, KS} for a selection of such results. The advantage of such metrics is that geometric problems are reduced to studying its behavior along a fixed geodesic $c(t)$ normal to all orbits.

\smallskip

In \cite{Ho} one finds a classification of  \coas on $n$-dimensional simply connected compact manifolds of dimensions $n\le 7$. As a consequence, he showed that such manifolds admit an invariant metric of nonnegative curvature, unless it is a member of a certain family of Brieskorn varieties $B_d$,  or a member of two 7-dimensional families $E_{p,q}$ and $E_{p,q}^*$. In \cite{GVWZ} it was shown that  the Brieskorn varieties $B_d$ do not admit an invariant metric of nonnegative curvature for $d\ge 3$, while $B_1=S^7$ and $B_2=T^1S^4$ admit nonnegatively curved invariant metrics. For the remaining two families, the questions remained open.

\smallskip

To describe these two families, we recall the structure of a \coo manifold. Let $G$ be a Lie group with subgroups $K_\pm$ and $H\subset K_\pm$ such that $K_\pm/H$ is a sphere. Then $K_\pm$ acts linearly on a disc $D_\pm$ and $M=G \times_{K_-} D_- \cup_{G/H} G \times_{K_+}  D_+$ defines a \com under the left action by $G$.
In terms of this language, $E_{p,q}$ is defined by

   $$G=\S^3\times \S^3,\ K_-=\{(e^{ip\theta},e^{iq\theta})\mid \theta\in\R\},\ K_+=\{(a,a)\mid a\in S^3\}\cdot H,  \text{ and } H\simeq\Z_2=\{(\pm 1,1)\}$$
 where we assume that  $\gcd(p,q)=1$ and $p$ even.

  \smallskip

 The special case of $E_{p,q}$ with $|p\pm q|=1$ is an Eschenburg space, which admits  an invariant metric with positive curvature, see \cite{Es,Zi}. In this paper  we show:

\begin{main*}
The cohomogeneity one manifold $E_{p,q}$ does not admit an $\S^3\times\S^3$ invariant metric with nonnegative curvature if $|p\pm q|>1$.
\end{main*}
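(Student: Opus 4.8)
The plan is to suppose that $E_{p,q}$ admits an $\S^3\times\S^3$-invariant metric $g$ with $\sec\ge 0$ and to deduce $\min\{(p+q)^2,(p-q)^2\}\le 1$. Since $p$ is even and $q$ odd, both $p\pm q$ are odd and nonzero, so this is exactly the negation of ``$|p\pm q|>1$'' and, at the same time, exactly the case in which an invariant metric of positive curvature exists by \cite{Es,Zi}; the theorem follows. All the delicate constraints will be located at the codimension four singular orbit $B_+$; the codimension two orbit $B_-$ is unproblematic, in keeping with the Grove--Ziller constructions \cite{GZ}.

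Write $g=dt^2+g_t$ along a unit geodesic $c(t)$, $0\le t\le L$, meeting every orbit orthogonally, with $c(0)\in B_-=G/K_-$ and $c(L)\in B_+=G/K_+$; here $g_t$ is an inner product on $\fg=\fsu(2)\oplus\fsu(2)$ (one has $\fh=0$, and $\Ad(H)$ acts trivially). Let $e_1,e_2,e_3$ and $f_1,f_2,f_3$ be the standard bases of the two summands, so $K_-$ is generated by $X=p\,e_1+q\,f_1$, while $\fk_+=\Delta\fsu(2)=\spam\{e_i+f_i\}$ has $K_+$-complement $\fp_+=\spam\{e_i-f_i\}$, with $[\fp_+,\fp_+]\subset\fk_+$. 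The elementary but crucial remark is that
$$X=\tfrac{p+q}{2}\,(e_1+f_1)\;+\;\tfrac{p-q}{2}\,(e_1-f_1),$$
so the $\fk_+$- and $\fp_+$-parts of $X$ scale like $p+q$ and $p-q$. Smoothness of $g$ across $B_-$ forces $g_0$ to be $\Ad(K_-)$-invariant --- in particular $\R e_1\oplus\R f_1$, $\spam\{e_2,e_3\}$ and $\spam\{f_2,f_3\}$ are mutually $g_0$-orthogonal and the last two are round --- and forces $|X|_{g_t}^2=t^2+O(t^4)$ with the remaining entries even in $t$. Smoothness across $B_+$ forces $g_L$ to be $\Ad(K_+)$-invariant, forces the collapse $g_t|_{\fk_+}=(L-t)^2Q_0+O((L-t)^4)$ for a left-invariant metric $Q_0$ on $K_+/H=\S^3$ whose leading coefficient is determined, and forces the $\fk_+$-$\fp_+$ cross terms of $g_t$ to vanish to second order in $(L-t)$. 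Together with the residual Weyl-group symmetries, this confines the Taylor data of $g_t$ at the two ends that enters below to an explicit finite-dimensional family.

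From $\sec\ge 0$ I would use: (i) the Riccati inequality $L'\le-L^2$ for the shape operator $L(t)$ of the principal orbit (a hypersurface), so that $\tfrac{d}{dt}\langle L e,e\rangle\le-\langle L e,e\rangle^2$ along every unit parallel field $e(t)\perp\dot c$, while $L(t)$ has an eigenvalue $\to+\infty$ like $1/t$ at $t=0$ and three eigenvalues $\to-\infty$ like $-1/(L-t)$ at $t=L$; (ii) for a Killing field $Z$, $(\tfrac12|Z|^2)''=|\nabla_{\dot c}Z|^2-\langle R(Z,\dot c)\dot c,Z\rangle\le|\nabla_{\dot c}Z|^2$; and, decisively, (iii) the sectional curvature of the concrete plane field $\sigma(t)=\spam\{X(c(t)),V(c(t))\}$ with $V=e_2+f_2\in\fk_+$. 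As $t\to L$ the denominator of $\sec\sigma(t)$ vanishes like $(L-t)^2$, because $V$ collapses while $X$ does not --- its $\fp_+$-part $\tfrac{p-q}{2}(e_1-f_1)$ is nonzero since $\gcd(p,q)=1$ and $p$ is even --- whereas, by the standard curvature formula for cohomogeneity-one metrics (in which, near $B_+$, the intrinsic orbit curvature, the Gauss terms built from $L$, and the Gray--O'Neill integrability terms, nontrivial because $[\fp_+,\fp_+]\subset\fk_+$, all contribute at the same order in $(L-t)$), the numerator is governed in the limit by $[X,V]=p\,[e_1,e_2]+q\,[f_1,f_2]$, whose $\fk_+$- and $\fp_+$-components scale again like $p+q$ and $p-q$. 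Carrying out the expansion of $\sec\sigma(t)$ as $t\to L$ --- using the normalizations above and conditions (i) and (ii) --- the requirement $\sec\sigma(t)\ge 0$ near $L$ becomes, after optimizing over the remaining free parameters of the metric at $B_+$ (which trade the weight on $(p+q)^2$ against the weight on $(p-q)^2$), the inequality $\min\{(p+q)^2,(p-q)^2\}\le 1$. This is the Eschenburg case, so $|p\pm q|>1$ cannot occur.

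The main obstacle is this last expansion. Even after the reduction, near $B_+$ the invariant metric still carries several parameters --- the left-invariant ``Berger'' parameters of the collapsing $\S^3$, the scale of the metric on $\fp_+$, and the off-diagonal functions, which need not vanish for $t<L$ --- so the numerator of $\sec\sigma(t)$ is a rational expression in them with many competing terms of the same order in $(L-t)$. One must organize the smoothness conditions at $B_+$ so that the $(p\pm q)^2$ contribution is cleanly exhibited, show that the Gray--O'Neill and off-diagonal terms cannot cancel it in the limit, and check that the softer analysis at $B_-$ produces no compensating inequality. Controlling that mass of terms --- the sort of long but structured computation characteristic of cohomogeneity-one obstruction arguments --- is where the real work lies.
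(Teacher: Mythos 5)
Your proposal takes a genuinely different route from the paper, but it has a structural gap that I do not think can be repaired: you have located the obstruction at the wrong singular orbit. The dependence on $p$ and $q$ enters the problem only through $K_-$, hence only through the smoothness conditions at the codimension-two orbit $B_-$; the germ of an invariant metric near $B_+$ is constrained by $K_+$-invariance and smoothness alone and is literally the same for all $(p,q)$ (and also for the family $E^*_{p,q}$, which contains nonnegatively curved members). Your expansion of $\sec\sigma(t)$ as $t\to L$ is a purely local computation at $B_+$: its limit is the sectional curvature of an honest $2$-plane at the smooth point $c(L)$, a finite number that is $\ge 0$ by hypothesis but otherwise unconstrained, and the ``free parameters of the metric at $B_+$'' can always be chosen so that it is nonnegative, whatever the direction of $X=p\,e_1+q\,f_1$. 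No inequality of the form $\min\{(p+q)^2,(p-q)^2\}\le 1$ --- a statement about integers --- can come out of a computation that sees $p,q$ only through the continuous datum of a direction in $\fg$. You concede that this decisive expansion is not carried out; the issue is not that it is long, but that it cannot produce the claimed conclusion.

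Compounding this, your statement of the smoothness conditions at $B_-$ (``$|X|^2_{g_t}=t^2+O(t^4)$ with the remaining entries even in $t$'') is wrong precisely where it matters. Because $H\cong\Z_2$ acts ineffectively on the slice, $K_-$ rotates the slice by $R(2\theta)$ while rotating $\spam\{e_2,e_3\}$ and $\spam\{f_2,f_3\}$ by $R(2p\theta)$ and $R(2q\theta)$; this forces $g_t(e_2^*,f_2^*)=t^{k}\phi(t^2)$ with $k=\min\{|p-q|,|p+q|\}$ \emph{odd}, i.e.\ an odd function vanishing to order $\ge 3$ exactly when $|p\pm q|\ge 3$. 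This odd cross term is the entire source of the obstruction: the paper combines it with the rigidity of self-adjoint families of Jacobi fields under $\sec\ge 0$ from \cite{VZ1} to show that $e_2^*-f_2^*$ is parallel on $[0,2L]$, forces the cross term to vanish identically, and concludes that $|e_2^*+f_2^*|$ is constant --- contradicting its collapse at $t=L$. That argument propagates information from $B_-$ to $B_+$ along the geodesic, which is what any proof here must do (a pointwise curvature inequality near either end is never obstructed by itself). By declaring $B_-$ ``unproblematic'' and flattening its smoothness conditions to ``even,'' your plan removes the only place where the hypothesis $|p\pm q|>1$ can be felt; indeed, with all entries even at $t=0$ you would be in exactly the situation of $E^*_{p,q}$, where the question is open.
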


The second  family  $E_{p,q}^*$ is determined by the same groups $G$ and $ K_-$, but $H$ is trivial and $K_+$ is connected.
The special case of $E_{1,1}^*$ is diffeomorphic to $ \CP^2\times \Sph^3$, and the product metric is invariant under $G$ and has   nonnegative curvature. For the remaining manifolds, we do not know if they admit invariant metrics with nonnegative curvature.

 \smallskip
Altogether we obtain:
\begin{cor*}
A $7$-dimensional cohomogeneity one manifold admits an invariant metric with nonnegative curvature, unless it is a Brieskorn variety $B_d^7$ with $d>2$, or $E_{p,q}$ with $|p\pm q|>1$, or possibly $E_{p,q}^*$ with $(p,q)\ne (\pm1,\pm1)$.
\end{cor*}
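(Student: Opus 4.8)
The Corollary is a bookkeeping consequence of the classification in \cite{Ho} together with the results quoted above, so the plan is simply to run through the three families it singles out. By \cite{Ho}, a simply connected compact cohomogeneity one manifold of dimension $7$ carries an invariant metric of nonnegative curvature unless it is a member of one of the families $B_d$, $E_{p,q}$, or $E_{p,q}^*$; it therefore suffices to decide these three. For the Brieskorn varieties, \cite{GVWZ} shows that $B_d$ admits no invariant metric of nonnegative curvature when $d\ge 3$, whereas $B_1\cong S^7$ and $B_2\cong T^1S^4$ do; this accounts for the exceptional clause ``$B_d^7$ with $d>2$.''

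Next I would treat $E_{p,q}$, where $\gcd(p,q)=1$ and $p$ is even. If $|p\pm q|>1$, the Theorem above says that no invariant metric of nonnegative curvature exists. If instead $|p\pm q|=1$ — which is compatible with the standing hypotheses, since $p$ even forces $q$ odd, as for $(p,q)=(2,1)$ — then $E_{p,q}$ is an Eschenburg space and hence carries an invariant metric of positive, a fortiori nonnegative, curvature by \cite{Es,Zi}. So inside this family the exceptions are exactly those with $|p\pm q|>1$. For $E_{p,q}^*$ the only positive case one needs is $(p,q)=(\pm1,\pm1)$: there $E_{p,q}^*\cong\CP^2\times\Sph^3$, and, as already noted before the statement of the Corollary, the product of the Fubini--Study and round metrics is nonnegatively curved and invariant under the relevant $\S^3\times\S^3$-action, so this manifold is not an exception; for every other $(p,q)$ the problem is open, which yields the clause ``possibly $E_{p,q}^*$ with $(p,q)\ne(\pm1,\pm1)$.'' Assembling the three items gives precisely the list in the statement, and this completes the argument.

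So the Corollary contains no real obstacle of its own: every ingredient is either a citation (\cite{Ho}, \cite{GVWZ}, \cite{Es,Zi}) or the product-metric observation recorded above, with the Theorem of this paper supplying the one remaining piece. The genuine difficulty lies entirely in that Theorem. To prove it one would parametrize an arbitrary $\S^3\times\S^3$-invariant metric on the regular part $G/H\times(0,L)$ by the functions describing its restriction to the orbits together with its behavior in the normal direction, impose the smoothness conditions forced by the linear $K_\pm$-actions on the discs $D_\pm$ at the two singular orbits $t=0$ and $t=L$, and then show that nonnegativity of a suitably chosen family of sectional curvatures — in particular of mixed planes spanned by $\dot c(t)$ and orbit directions — is incompatible with those boundary conditions unless $|p\pm q|=1$; that analysis is where essentially all the effort of the paper is concentrated.
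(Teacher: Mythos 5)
Your derivation of the Corollary is correct and is exactly the paper's (implicit) argument: Hoelscher's classification reduces everything to the three families, $B_d$ is settled by \cite{GVWZ}, $E_{p,q}$ with $|p\pm q|=1$ by \cite{Es,Zi} and with $|p\pm q|>1$ by the Theorem, and $E_{1,1}^*\cong \CP^2\times\Sph^3$ by the product metric. Your closing sketch of how one would prove the Theorem (imposing nonnegativity of mixed sectional curvatures directly) does not match the paper's actual method, which instead uses the concavity/rigidity of self-adjoint families of Jacobi fields from \cite{VZ1} to force a Killing field to be parallel; but since you invoke the Theorem only as a black box, this is immaterial to the Corollary.
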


 \smallskip

 We now describe how to obtain the obstruction. Let $c(t)$ be a normal geodesic  perpendicular to all orbits. It meets the singular orbits at time $t=n L,\ n\in\Z$ with $c(2nL)\in G/K_-$ and $c((2n+1)L)\in G/K_+$. It is sufficient to describe the metric along the geodesic $c$, which is determined by a collection of functions describing the homogenous metric on the principal orbits $G/H$. At $t=n L$, the functions must satisfy certain smoothness conditions in order for the metric to extend smoothly over the singular orbits. The Weyl group, i.e. the stabilizer of the normal geodesic, implies some of these conditions. In particular, all functions must be even at  $t=(2n+1) L$. At $t=0$ and $t=2L$ the stabilizer groups are the same, and hence the smoothness conditions as well. But in this case they are not implied by the action of the Weyl group and we use the methods in \cite{GVZ} to determine them. In particular, they depend on $p,q$ and $|p\pm q|\ge 3$ implies that some functions are odd. We then use the concavity of virtual Jacobi fields developed in \cite{VZ1} in order to show that one of the action fields $Z^*,\ Z\in\fg$ is parallel on $[0,2L]$. This eventually leads to a contradiction by showing that the length of another action  field is constant on $[0,2L]$, but on the other hand must vanish at $t=L$ due to the smoothness conditions. For the family $E_{pq}^*$ the smoothness conditions at $t=0$ change and imply that all functions are even, and thus the proof breaks down. On the other hand, the method still implies strong restrictions on  a possible metric with nonnegative curvature.

\section{Preliminaries}

\bigskip

For a general reference for this section see e.g.  \cite{AA,Zi}.
A  compact cohomogeneity one manifold is  the union of two
homogeneous disc bundles. Given  compact Lie groups $H,\, \Km ,\,
\Kp$ and $\G$ with inclusions $H\subset \Kpm \subset G$ satisfying
$\Kpm/H=\Sph^{\ell_\pm}$, the transitive action of $\Kpm$ on
  $\Sph^{\ell_\pm}$ extends to a linear action on the disc $\Disc^{{\ell_\pm}+1} $.
We can thus define
$M=G\times_{\Km}\Disc^{{\ell_-}+1}\cup G\times_{\Kp}\Disc^{{\ell_+}+1}$
glued along the boundary $ \partial (G\times_{\Kpm}\Disc^{\ell_\pm+1})=G\times_{\Kpm}\Kpm/H=G/H$
via the identity. $G$  acts on $M$ on each half via left action in the first component. This action has principal isotropy group $H$ and singular isotropy groups $\Kpm$.
 One possible description of a cohomogeneity one manifold is thus
simply in terms of the Lie groups $H\subset \{\Km , \Kp\}\subset G$.

\smallskip

A $G$ invariant metric  is determined by its restriction
to  a geodesic $c$  normal to all orbits.
 At the  points $c(t)$ which are regular with respect to the action of $G$,
 the isotropy is constant  equal to  $H$.
We fix a biinvariant inner product $Q$ on the Lie algebra $\fg$
 and choose a $Q$-orthogonal splitting $\fg=\fh\oplus\fh^\perp$.  At a regular point we identify
 the tangent space to the orbit $G/H$, i.e.   $\dot{c}^\perp\subset T_{c(t)}M$,  with $\fh^\perp$
 via action fields: $X\in\fh^\perp\to
X^*(c(t))$. $H$ acts on $\fh^\perp$ via the adjoint representation
and a $G$ invariant metric on $G/H$ is described by an $\Ad(H)$
invariant inner product on $\fh^\perp$. Along $c$ the metric on $M$
is thus given  by $g=dt^2+g_t$ with $g_t$ a one parameter family of $\Ad_H$ invariant metrics on $\fh^\perp$. These are described by endomorphisms
$$P_t\colon\fh^\perp\to\fh^\perp\ \text{ where } \ g_t(X^*,Y^*)_{c(t)}=Q(P_tX,Y), \ \text{ for } \ X,Y\in\fh^\perp.$$

These endomorphisms $P_t$ commute with the action of $\Ad_H$ since the metric is  $\Ad_H$ invariant and we thus choose a fixed $Q$-orthogonal splitting
$$
\fh^\perp=\fn_0\oplus\fn_1\oplus\ldots\oplus\fn_r.
$$
where $\Ad_H$ acts trivially on $\fn_0$ and irreducibly on $\fn_i, i>0$. The metric $P_t$ is arbitrary on $\fn_0$, and a multiple of $\Id$ on $\fn_i, i>0$. Furthermore, $\fn_i$ and $\fn_j$ are orthogonal if the representations of $\Ad_H$ are inequivalent. If they are equivalent, inner products are described by $1,2$ or $4$ functions, depending on wether the equivalent representations are orthogonal, complex or quaternionic.

We choose the normal geodesic such that $c(t),\ 0\le t\le L$ is a minimizing geodesic between the two singular orbits. Thus $c(2n \cdot L)\in G/K_-$ and $c((2n+1) \cdot L)\in G/K_+$.
We  choose a fixed $Q$ orthonormal basis $X_i$ of $\fh^\perp$, adapted to the above decomposition The metric $P_t$ is then described by the functions $f_{ij}(t)=g_t(X_i^*,X_j^*)_{c(t)}$ defined for $t\in\R$. These functions must  satisfy certain smoothness conditions at  $c(n\cdot L),\ n\in\Z$ in order for the metric to extend smoothly across the singular orbits.
 Some of these smoothness conditions are implied by the action of the Weyl group. Recall that the Weyl group $W$
is by definition the stabilizer of the geodesic $c$ modulo its
kernel, which by  construction is equal to $H$.  If $M$ is compact, $W$ is a dihedral subgroup of $N(H)/H$  and is generated by involutions $w_{\pm} \in
W$ with $w_-(\dot c(0))=-\dot c(0)$ and
$w_+(\dot c(L))=-\dot c(L)$. Thus $w_+w_-$ is a translation by
$2L$, and has order $|W|/2$ when $W$ is finite. Since $K_-$ acts linearly on the slice $D_-$, and the identity component  $(\K_-)_0$ acts transitively on the unit sphere in $D_-$,  the involution
$w_-$ can  be represented  uniquely as the
 element $a\in(\K_-)_0$ mod $H$ with $a\dot c(0)=-\dot c(0)$, where $\dot c(0)\in D_- $, and similarly for $w_+$.
  Note that $W$ is finite if and only if $c$ is a closed
geodesic, and in that case the order $|W|$ is the number of minimal
geodesic segments intersecting the regular part.  Note also that any
non-principal isotropy group along $c$ is of the form $w K_\pm
w^{-1}$ for some $w \in N(H)$ representing an element of $W$.

\smallskip

At a singular point $t_0=nL$, with stabalizer group $K$, we define a  $Q$-orthogonal decomposition:
 $$\fg=\fk\oplus \fm  , \quad  \fk=\fh\oplus \fp \ \text{ and thus }\ \fh^\perp=\fp \oplus \fm . $$
  Here $\fm$ can be viewed as the tangent space to the singular orbit $G/K$ at
  $c(t_0)$. The slice $V$, i.e. the vector space normal to $G/K$ at
  $c(t_0)$, can be identified with  $\dot c(t_0)\oplus \fp$.
   For this, we send $X\in \fp$ to $\bar X= \lim_{t\to t_0} X^*(c(t))/t$. Note that since $K$ preserves the slice $V$ and acts linearly on it, we  have $X^*(c(t))=t\bar X\in V$.

At $t=t_0$ the slice is orthogonal to the orbit, but not in general at nearby points.
 $K$ acts via the
isotropy action $\Ad(K)_{| \fm}$ of $G/K$ on $\fm$ and via the slice
representation on $V$.  Thus $w_-$ also acts on $V$ and the tangent space $\fm$, well defined up to the action of $H$, and relates the functions. If, e.g., $w_-$ preserves one of the modules $\fn_i$, then the  function $f$ defined by ${P_t}_{|\fn_i}=f\Id$ must be even (assuming for simplicity that $t_0=0$) since $w_-(c(t)=c(-t)$ and $(\Ad(w)(X))^*(c(t))=X^*(c(-t))$. Hence $f(t)=f(-t)$.

 \bigskip

 \section{Obstructions}

 \bigskip

 We now discuss the \coms $E_{pq}$.

 \smallskip
 Recall that $G=\Sp(1)\Sp(1)$ and
 $$K_-=\S^1_{p,q}=(e^{ipt},e^{iqt}), \  K_+=\{(q,q)\mid q\in\Sp(1)\}\cdot H, \text{ and }  H=\{(1,\pm 1)\}.$$
 with $p$ even  and $\gcd(p,q)=1$, and hence $q$ odd.
  If $|p\pm q|=1$ the manifold is an Eschenburg space, see \cite{Es}, which admits a cohomogeneity one metric whose group diagram is as above and an invariant metric of positive curvature. We will thus assume that  $|p\pm q|\ne 1$, and hence $|p\pm q|\ge 3$. In particular, we also have $p\ne 0,\ q\ne 0$.

As in the previous Section, we assume that the normal geodesic $c$ connecting the two singular orbits is parameterized on the interval $[0,L]$. We first determine the Weyl group $W$. For $t=0$, the group $K_-$ acts via rotation on the slice $D_-$ with stabilizer group $H$ at $\dot c(0)$, i.e.  $(e^{ip\pi},e^{iq\pi})\in H$. Since $p$ is even and $q$ odd, $w_-=(e^{ip\pi/2},e^{iq\pi/2})$ is one of $ (\pm 1,\pm i) \mod H$. For $t=L$, $K_+^0=\Sp(1)$ acts via left multiplication on $D_+\simeq\QH$ and thus $w_+=(-1,-1)$. Since $w_+$ lies in the center of $G$, it acts trivially on  $\dot c(L)^\perp\subset T_{c(L)}M$ and hence all functions are even at $t=L$.
 Since  $w_+(c(0))=c(2L)$  we also have  $G_{c(2L)}=w_+(K_-)w_+^{-1}=K_-$ and in fact $G_{c(2n L)}=K_-$. On the other hand, $G_{c(3L)}=w_-(K_+)w_-^{-1}$ is different from $G_{c(L)}=K_+$. Notice also that $W=\Z_2\oplus\Z_2$ and hence $c(4L)=c(0)$.

As a basis of $\fh^\perp=\fg$ we choose $X_1=(i,0),\ X_2=(j,0),\
X_3=(k,0)$ and $Y_1=(i,0),\ Y_2=(j,0),\
Y_3=(k,0)$. Since $H$ lies in the center of $G$, there are no restrictions for the metric $P_t$ on $\fh^\perp$.
We define

$$f_i=g( X_i^*,X_i^*) ,\quad g_i=g( Y_i^*,Y_i^*),\quad h_i=g( X_i^*,Y_i^*)$$
$$f_{ij}=g (X_i^*,X_j^*),\quad g_{ij}=g( Y_i^*,Y_j^*),\quad h_{ij}=g( X_i^*,Y_j^*)\quad \text{ for } i\neq j$$
\smallskip

Notice that at $t=0$ the only vanishing Killing field is $pX_1^*+qY_1^*$, whereas at $t=L$, only $X_i^*+Y_i^*$, $i=1,2,3$ vanish.
For the proof, we will only need the  following smoothness conditions at $t=0$.

\begin{lem}\label{smooth}
If the metric is smooth at $t=0$, then the following hold:
\begin{itemize}
\item[(a)] $X_1^*,Y_1^*,X_3^*,Y_3^*$ are orthogonal to $X_2^*$ and $Y_2^*$,
\item[(b)] $f_2 \text{ and }  g_2 \text{ are even, and } h_2=t^{ k}\phi_1(t^2)$, where $k=\min\{|p-q|,|p+q|\}$,
\item[(c)] $g(X_2^*,p X_1^*+q Y_1^* )=t^{ |p|+2}\phi_2(t^2),\ g(Y_2^*,p X_1^*+q Y_1^* )=t^{ |q|+2}\phi_3(t^2).$
\end{itemize}
where $\phi_i$ are smooth functions.
\end{lem}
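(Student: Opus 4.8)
The plan is to deduce (a)--(c) from the smoothness of an invariant metric at the singular orbit $G/K_-$, whose slice is the two‑disc $D_-$, using the reduction of smoothness to equivariance on the slice employed in the previous section and in \cite{GVZ,VZ1}. First I would fix, say via the normal exponential, a $K_-$‑equivariant identification $\Phi\colon D_-\to M$ of $D_-$ with the slice at $c(0)$, so that $\Phi(tv_0)=c(t)$ with $v_0=\dot c(0)$ and each radial ray $r\mapsto\Phi(ru)$ is a normal geodesic, hence perpendicular to every orbit. Then for $A,B\in\fh^\perp$ the coefficient $v\mapsto g_{\Phi(v)}(A^*,B^*)$ is a smooth, $K_-$‑equivariant function on $D_-\cong\C$, and I would use that a smooth function on $\C$ transforming under the (rotational) slice action with weight $m$ vanishes to order $|m|$ at $0$ and equals, along the normal geodesic $v=t$, $t^{|m|}$ times a function smooth in $t^2$, while a trivially transforming one is itself smooth in $t^2$. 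I would also invoke the fact from the previous section that the Weyl element $w_-$ forces $g(A^*,B^*)$ to be even, resp.\ odd, when $\Ad(w_-)$ acts on $A$ and $B$ with equal, resp.\ opposite, sign.

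Next I would pin down the representation data. Since $K_-\cong\S^1$ acts on $D_-$ with $\partial D_-$ an orbit, with $H$ the stabilizer of a point of $\partial D_-$, and $K_-/H\cong\S^1$, the slice action is a rotation; writing $K_-=\{k_\theta=(e^{ip\theta},e^{iq\theta})\}$ of period $2\pi$ with $H=\{k_0,k_\pi\}$, the condition that $H$ be the kernel forces $k_\theta\cdot v=e^{\pm 2i\theta}v$. Under $\Ad(K_-)$ the algebra $\fg$ splits as the fixed plane $\langle X_1,Y_1\rangle$ — which contains $\fk_-=\R(pX_1+qY_1)$ and whose $\fm$‑part is $\R(qX_1-pY_1)$ — together with $\fn_a=\langle X_2,X_3\rangle$, on which $k_\theta$ rotates by $2p\theta$, and $\fn_b=\langle Y_2,Y_3\rangle$, rotated by $2q\theta$; relative to the slice these are the weight‑$p$ and weight‑$q$ modules. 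Because $p$ is even and $q$ odd, $w_-=(e^{ip\pi/2},e^{iq\pi/2})$ acts via $\Ad$ as $+\Id$ on $\langle X_1,Y_1\rangle\oplus\fn_a$ and as $-\Id$ on $\fn_b$. Finally $p,q\neq0$ and, as $\gcd(p,q)=1$, $|p|\neq|q|$, so $\R(qX_1-pY_1)$, the slice, $\fn_a$ and $\fn_b$ are pairwise inequivalent $K_-$‑modules.

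For (a) I would note that smoothness at $t=0$ makes $g_0$ an $\Ad(K_-)$‑invariant inner product on $T_{c(0)}M=\fm\oplus D_-$, so by Schur and the above inequivalences it is block diagonal for $\R(qX_1-pY_1)\oplus\fn_a\oplus\fn_b\oplus D_-$, and an $\SO(2)$‑invariant inner product on a rotation module $\R^2$ is a multiple of the standard one; since $X_1^*(c(0)),Y_1^*(c(0))\in\R(qX_1-pY_1)$, $X_2,X_3\in\fn_a$ and $Y_2,Y_3\in\fn_b$, all eight orthogonalities hold at the singular orbit. For (b): $w_-$ fixes $X_2$ and negates $Y_2$, so $f_2,g_2$ are even and $h_2=g(X_2^*,Y_2^*)$ is odd; viewed as a slice coefficient, $h_2$ transforms in $\fn_a^*\otimes\fn_b^*$, which as $\SO(2)$‑modules is the sum of the weight‑$(p-q)$ and weight‑$(p+q)$ pieces, contributing $t^{|p-q|}$ and $t^{|p+q|}$ times functions smooth in $t^2$; since $|p+q|$ and $|p-q|$ differ by the even number $2\min(|p|,|q|)$ their sum is $t^{k}\phi_1(t^2)$ with $k=\min\{|p-q|,|p+q|\}$, necessarily odd.

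The real work is (c). I would set $Z=pX_1+qY_1\in\fk_-$, the generator of the collapsing circle, and $\gamma(v)=g_{\Phi(v)}(X_2^*,Z^*)$. Since $Z$ generates the rotation of $D_-$, $Z^*(\Phi(v))=d\Phi_v(\lambda iv)$ with $\lambda=\pm2$, so $\gamma(v)=\lambda\,\mu(v)(iv)$ where the $1$‑form $\mu$ on $D_-$ given by $\mu(v)(u)=g_{\Phi(v)}(X_2^*,d\Phi_v u)$ is smooth. The key observation is that $\mu$ annihilates the radial vector field $R(v)=v$: the vector $d\Phi_v(v)$ is tangent to the normal geodesic $r\mapsto\Phi(rv/|v|)$, which is perpendicular to every orbit, while $X_2^*$ is tangent to the orbit. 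An elementary computation then shows that a smooth $1$‑form $\mu$ on $D_-$ with $\iota_R\mu=0$ has every homogeneous component of its Taylor expansion of the form (a polynomial of degree one less)$\cdot\,\omega(v,\cdot)$, $\omega$ the area form, whence $\gamma(v)$ equals $|v|^2$ times (the jet of) a smooth function $Q$; but $\gamma$ transforms with weight $p$ on the slice (it pairs an $\fn_a$‑direction with the $\fk_-$‑direction), hence so does $Q$, so $Q$ vanishes to order $|p|$ at $0$. Thus $\gamma(v)=|v|^2\cdot O(|v|^{|p|})$, and since $\gamma$ is even by $w_-$ one gets $g(X_2^*,Z^*)=t^{|p|+2}\phi_2(t^2)$ along $c$; running the identical argument with $Y_2\in\fn_b$ (weight $q$) yields $g(Y_2^*,Z^*)=t^{|q|+2}\phi_3(t^2)$. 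I expect the decisive point to be precisely this extra $t^2$ in (c): that pairing an $\fm$‑action field against the collapsing Killing field, beyond the order‑$|p|$ vanishing dictated by the module weight, acquires an additional $|v|^2$ from $\iota_R\mu=0$, and that one must then combine this jet‑level statement with the $w_-$‑parity to reach the clean factorization $t^{|p|+2}\phi_2(t^2)$; the weight bookkeeping of the second paragraph (the slice has weight $\pm2$, and $\fn_a,\fn_b$ have weights $p,q$) is routine but underlies everything downstream.
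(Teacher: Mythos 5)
Your proof is correct, and its overall architecture (decompose $\fh^\perp$ into $K_-$-modules of weights $0,p,q$ relative to the weight-$1$ slice, Schur for (a), weight-plus-parity bookkeeping for (b)) coincides with the paper's, which however delegates (b) and the order estimates to Lemmas 6.4/6.5 of \cite{GVZ} and Lemma 3.4 of \cite{VZ2}. Where you genuinely diverge is part (c): the paper works on the $4$-dimensional module $D_-\oplus\fm_1$ with weights $(1,p)$, uses $g(\dot c,X_2^*)\equiv 0$ to force $r=g(e_2,X_2^*)$ to satisfy \emph{both} the order-$|p-1|$ and order-$|p+1|$ conditions simultaneously (hence $r=t^{|p|+1}\phi(t^2)$), and then multiplies by $t$ via $Z^*(c(t))=te_2$; you instead keep $Z^*$ unnormalized, write $\gamma(v)=2\mu(v)(iv)$, and extract the factor $|v|^2$ from $\iota_R\mu=0$ before applying the weight-$p$ equivariance to the quotient. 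The geometric input is identical in both versions --- radial rays in the slice are normal geodesics, hence $g$-orthogonal to the orbits --- but your packaging is self-contained and makes transparent exactly which Taylor coefficient ($z^{|p|}$) is being killed, at the cost of the (standard, but worth stating) jet-level facts that divisibility of the Taylor series by $x^2+y^2$ and weight-$m$ equivariance together yield $t^{|m|+2}\phi(t^2)$ along the axis for a smooth $\phi$.

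One small inaccuracy: your claim that the slice, $\fn_a$, $\fn_b$ and $\R(qX_1-pY_1)$ are \emph{pairwise} inequivalent fails when $|q|=1$ (e.g.\ $(p,q)=(4,1)$ is admissible), since then $\fn_b$ and $D_-$ are both weight-$1$ modules. This does not damage anything: part (a) only needs the mutual inequivalence of the three submodules of $\fm$, which holds since $p,q\neq 0$ and $p\neq\pm q$, and in part (c) you compute the pairing with the slice directly rather than via Schur. You should just restrict the block-diagonality claim to $\fm$.
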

\begin{proof}
The proof is similar to the proof of Theorem 6.1 in \cite{GVZ}, see also \cite{VZ2}. The decomposition of $\fh^\perp=\fg$ at $t=0$ is given as follows. $\fp$ is spanned by $p X_1+q Y_1$, and $\fm_0$ is spanned by $qX_1-pY_1$ on which $K_-$ acts trivially. Furthermore on $\fm_1=\spam\{X_2,X_3\}$, the stabilizer group $K_-$ acts by $R(2p\theta)$, and on  $\fm_2=\spam\{Y_2,Y_3\}$ by $R(2q\theta)$, where $R(\theta)$ is the standard rotation on $\R^2$. On the slice $K_-$ acts as $R(2\theta)$ since $H\subset K_-$ is the ineffective kernel of the action, and $H$ has order $2$.

Part (a) follows from the fact that the metric is invariant under the action of $K_-$  at $t=0$. Indeed,  since $p\ne \pm q$, all irreducible sub representation of $K_-$ on $\fm=\fm_0\oplus\fm_1\oplus\fm_2$ are inequivalent. Furthermore, $\fp$ is orthogonal to $\fm$ at $t=0$. Thus the claim follows from Schur's Lemma.

Lemma 6.4 in \cite{GVZ} implies that $f_2-f_3=t^{2p}\phi_1(t^2)$ and $ f_2+f_3=\phi_2(t^2)$ for some smooth functions $\phi_i$ and hence $f_2$ is even. Similarly  for $g_2$. For $h_2$ we use Lemma 6.5 in \cite{GVZ}. It implies that $h_2+h_3=t^{|p-q|}\phi_1(t^2)$ and $h_2-h_3=t^{|p+q|}\phi_2(t^2)$ if $p$ and $q$ have the same sign and $h_2+h_3=t^{|p+q|}\phi_1(t^2)$ and $h_2-h_3=t^{|p-q|}\phi_2(t^2)$ if their signs are opposite. This implies part (b).

For part (c) the proof of  \cite{GVZ} Lemma 6.5 needs to be modified as follows, see also \cite{VZ2}. Let $e_1,\ e_2$ be a basis of the slice $D_-$, orthonormal in $g_{|D_-}$, where $e_1=\dot c(0),\ e_2=Z\in\fp$ and hence $Z=pX_1+qY_1$. Furthermore,  $K_-$ acts by $R(2\theta)$ on $D_-$. On the 4 dimensional space $D_-\oplus\fm_1$ the stabilizer group $K_-$ acts by $\diag(R(2\theta),R(2p\theta))$. The metric is given by $g(e_i,e_j)=\delta_{ij}$ and  $g(e_1,X_2)=g(e_1,X_3)=0$ since the geodesic is orthogonal to all orbits. Furthermore, set $r(t)=g(e_2, X_2^*)$ and $ s(t)=g(e_2, X_3^*)$. By \cite{VZ2}  Lemma 3.4 (b), we see that $r(t)=t^{|p+1|}\phi_1(t^2)$ and $r(t)=t^{|p-1|}\phi_2(t^2)$ and hence $r(t)=t^{|p|+1}\phi(t^2)$. Recall that for $Z\in\fp$ we have $Z^*(c(t))=te_2$. Hence $g( Z^*, X_2^*) =  g(te_2, X_2^*)=t\; r(t)=t^{|p|+2}\phi(t^2)$. Similarly for $g( Z^*, Y_2^*)$.
\end{proof}

These conditions hold at $t=0$, and by the above at $t=2L$ as well. For us, the important consequence is that $h_2$ is an odd function with $h_2'(0)=0$, a property not implied by the Weyl group.

\begin{lem}\label{parallel}
If  $E_{p,q}$ with $|p\pm q|\ge 3$ carries a metric with nonnegative curvature, then $X_2^*-Y_2^*$ is parallel on $[0,2L]$.
\end{lem}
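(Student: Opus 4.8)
The plan is to use the concavity of virtual Jacobi fields from \cite{VZ1} together with the smoothness conditions of \lref{smooth}. Recall that for a metric of nonnegative curvature, the length function $|X^*(c(t))|^2$ of any action field, and more generally the inner product of an action field with a virtual Jacobi field, satisfies a concavity estimate; the key point is that a nonnegative quantity which is concave on an interval and vanishes at both endpoints must vanish identically, which then forces the relevant field to be parallel along that interval.

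First I would set up the candidate field $Z = X_2 - Y_2 \in \fg$ and examine $|Z^*(c(t))|^2 = f_2 - 2h_2 + g_2$ on $[0,2L]$. By \lref{smooth}(b) applied at $t=0$ and (since the stabilizer and hence the smoothness conditions at $t=2L$ coincide with those at $t=0$, as noted after \lref{smooth}) also at $t=2L$, the functions $f_2$ and $g_2$ are even while $h_2$ is odd with $h_2'(0)=0$ (indeed $h_2 = t^k\phi_1(t^2)$ with $k=\min\{|p-q|,|p+q|\}\ge 3$). Thus $|Z^*(c(t))|^2$ has vanishing first derivative at $t=0$, and by the reflection symmetry at $t=2L$ also there; moreover \lref{smooth}(a) guarantees that $X_2^*,Y_2^*$ are orthogonal to the other modules, so no cross terms with $\fn_0$ or the other $\fn_i$ intervene. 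The strategy is then to show that $|Z^*(c(t))|$ itself — or an appropriate virtual Jacobi field built from $Z^*$ — is concave on $[0,2L]$ using the nonnegativity of the curvature, exactly as in the argument of \cite{VZ1}: action fields restricted to a normal geodesic, after a suitable modification to account for the fact that they are not Jacobi fields, satisfy a Riccati-type inequality that yields concavity of the norm.

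The second step is to combine concavity with the boundary behavior. A concave function on $[0,2L]$ that has a critical point at an interior-accessible endpoint (or whose one-sided derivatives at $0$ and $2L$ have the wrong sign) is forced to be constant; more precisely, since $|Z^*|^2$ is even about both $t=0$ and $t=2L$ and concave on $[0,2L]$, it must be monotone in a way that is incompatible with both endpoint derivatives vanishing unless it is constant. Constancy of $|Z^*(c(t))|$ together with the curvature inequality then forces $Z^*$ to be parallel along $c$ on $[0,2L]$ — this is the standard rigidity in the equality case of the concavity estimate (the "Jacobi field" degenerates to a parallel field precisely when curvature vanishes in the relevant planes and the modification term is constant). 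I would spell out which form of the estimate from \cite{VZ1} is being invoked — likely the statement that $t\mapsto |X^*(c(t))|$ (or its square) is concave between singular orbits when $\sec\ge 0$, with equality forcing parallelism.

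The main obstacle I expect is verifying that $|Z^*(c(t))|^2$ does not merely have vanishing derivative at the endpoints but is genuinely forced to be constant, i.e.\ ruling out the possibility that concavity is strict somewhere in the interior while still accommodating the even boundary behavior. The subtlety is that concavity plus "$g'(0)=0$" alone would force $g$ to be nonincreasing, and "$g'(2L)=0$" (from the reflection at $2L$, which makes $g$ even there, so actually $g'(2L^-)\le 0$ must be reconciled with evenness giving $g'(2L)=0$) forces $g$ nondecreasing near $2L$ — the tension between these is what yields constancy, but one must be careful that the virtual Jacobi field used is admissible on the whole closed interval $[0,2L]$ and that the smoothness at $t=0$ (in particular $h_2'(0)=0$, which is \emph{not} forced by the Weyl group) is what makes the endpoint derivative vanish; without the odd vanishing of $h_2$ to second order the argument collapses. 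A secondary technical point is checking that the cross terms of $Z^*$ with $\dot c$ vanish so that the norm computation and the Jacobi estimate apply to the tangential component cleanly; this follows since $c$ is normal to all orbits, so $g(Z^*,\dot c)\equiv 0$.
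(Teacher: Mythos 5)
Your proposal reaches for the right tool (\cite{VZ1}) and correctly isolates the decisive analytic input, namely that $f_2'$, $g_2'$ and $h_2'$ all vanish at $t=0$ and $t=2L$, with $h_2'(0)=0$ being the condition not forced by the Weyl group. But the mechanism you build the argument on --- that $|Z^*(c(t))|^2$, or $|Z^*(c(t))|$, is concave whenever $\sec\ge 0$ --- is false. For a Jacobi field $J$ one has $(|J|^2)''=2\bigl(|J'|^2-\langle R(J,\dot c)\dot c,J\rangle\bigr)$, and the two terms compete; already in flat space $|tv|^2$ is convex. The rigidity result actually needed (Proposition 3.2 of \cite{VZ1}, quoted in the paper's proof) is not a concavity statement about a single field but a statement about the self-adjoint family $V=\{Z^*\mid Z\in\fh^\perp\}$, whose Riccati operator is monotone when $\sec\ge 0$; its hypotheses (b), (c), (d) exist precisely to control the blow-up of that operator at the times where members of $V$ vanish. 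Your sketch verifies only hypothesis (a). You never address (c): the fields $X_i+Y_i$ vanish at the interior point $t=L$, so the candidate field must be $g_{c(0)}$-orthogonal to all of them; this is where \lref{smooth}(a) enters for $i=1,3$, and for $i=2$ it forces you to start not with $X_2-Y_2$ but with the combination $aX_2+bY_2$ orthogonal to $X_2+Y_2$ at $t=0$ (these coincide only if $f_2(0)=g_2(0)$, which you do not know). Nor do you address (d): $pX_1^*+qY_1^*$ vanishes at $t=0$, and one must check $g(X'(0),(pX_1^*+qY_1^*)'(0))=0$, which is exactly what the $t^{|p|+2}$ and $t^{|q|+2}$ vanishing in \lref{smooth}(c) provides --- a part of the lemma your argument never uses. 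Finally, hypothesis (b) requires the remark that $G_{c(0)}=G_{c(2L)}$, so orthogonality conditions at the two endpoints agree.

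There is a second, smaller gap at the end: even granting parallelism of some field in $\spam\{X_2,Y_2\}$, you assert from the outset that it is $X_2^*-Y_2^*$. The paper first produces a parallel field $aX_2^*+bY_2^*$ with $a,b$ determined by the metric at $t=0$, and only afterwards identifies it as a multiple of $X_2^*-Y_2^*$: a parallel member of a self-adjoint family stays orthogonal to anything it is orthogonal to at one point, near $t=L$ the field $X_2^*+Y_2^*$ comes from a vector in $\fp$ for $K_+$, and $\Ad(K_+)$-invariance of the metric makes $X_2-Y_2$ the only vector in $\spam\{X_2,Y_2\}$ orthogonal to $X_2+Y_2$ there. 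Since the lemma asserts parallelism of $X_2^*-Y_2^*$ specifically, this identification step cannot be omitted.
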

\begin{proof}
 We will use  the results in \cite{VZ1}. Recall that a vector space $V$ of Jacobi fields is self adjoint along a geodesic $c$ if $\dim V=\dim M-1$ and $\ml J_1,J_2'\mr=\ml J_1',J_2\mr$ for all $J_1,J_2\in V$.   Proposition 3.2 in \cite{VZ1} states that a Jacobi field $X$ which belongs to a self adjoint family $V$ of Jacobi fields is parallel on an interval $[t_0,t_1]$, if the following conditions are satisfied:

\begin{enumerate}
 \item[(a)] $g_{c(t)}(X,X)\neq 0$, $g_{c(t)}(X,X)'=0$ for $t=t_0$ and $t=t_1$,\vspace{4pt}
 \item[(b)] If $Y\in V$ and $g( X(t_1),Y(t_1))=0$ then $g( X(t_0),Y(t_0))=0$,\vspace{4pt}
 \item[(c)] If $Y\in V$ and $Y(t)=0$ for some $t\in (t_0,t_1)$ then $g( X(t_0),Y(t_0))
 =0$,\vspace{4pt}
 \item[(d)] If $Y(t_0)=0$, then $g( X'(t_0),Y'(t_0))=0$.
\end{enumerate}

 \smallskip

 Since the restriction of the Killing vector fields $Z^*,\ Z\in \fg$ to the geodesic $c$ are Jacobi fields, a natural self adjoint family is given by $V=\{Z^*\mid Z\in \fh^\perp\}$. For simplicity we use the same letter for $Z\in\fg$ and the action field $Z^*$ on $M$, and denote the covariant derivative $\nabla_{\dot c}Z^*(c(t))$ by $Z'$.

Let $X=a X_2+b Y_2$ be the vector  in $\spam\{X_2,Y_2\}$ such that $g_{c(0)}(X,X_2+Y_2)=0$.   We first show that $X$ is parallel on $[0,2L]$, and verify the conditions above one at a time.

\begin{enumerate}
\item[(a)] $X$ does not vanish in $[0,2L]$ since the only Jacobi fields that vanish are $pX_1+qY_1$ at $t=0$ and $t=2L$, $X_i+Y_i$ at $t=L$, and  by assumption $X\ne X_2+Y_2$.
\no For the derivative we have $g(X,X)'=a^2f_2'+2abh_2'+b^2g_2'$. But since $|p\pm q|\ge 3$, \lref{smooth} (b) implies that $f_2', h_2'$ and $g_2'$ vanish at $t=0$, and $t=2L$ as well. \vspace{4pt}
\item[(b)] Recall that $G_{c(0)}=G_{c(2L)}$ and hence the Jacobi fields orthogonal to $X$ at $t=2L$ are exactly the ones that are orthogonal to $X$ at $t=0$.\vspace{4pt}
\item[(c)] The Jacobi fields that vanish in $(0,2L)$ belong to $\fk_+$. By  \lref{smooth} (a),  $X_1+Y_1$ and $X_3+Y_3$ are orthogonal to $X$ at $t=0$, while $X_2+Y_2$ is orthogonal to $X$ at $t=0$  by assumption.\vspace{4pt}
\item[(d)] Since $Z=p X_1+q Y_1$ is the only element of $V$ vanishing at $t=0$, we need to prove that $g_{c(0)}(X',Z')=0$.
By \lref{smooth} (c),  we have $g(X,Z)''(0)=0$ since  $p,q\ne 0$. Clearly $Z(0)=0$, and since $Z$ is a Jacobi field, $Z''(0)=-R_{c(0)}(Z,\dot c)\dot c=0$ as well. Thus we have at $t=0$:
$$
0=g(X,Z)''=g(X'',Z)+2g(X',Z')+g(X,Z'')=2g(X',Z')
$$
 \end{enumerate}

It follows that for some constants $a$ and $b$, the Killing vector field $a X_2+b Y_2$ is parallel  on the interval $[0,2L]$.

We now use the fact that if $J_1$ and $J_2$ are two Jacobi fields in a self adjoint family with $J_1$ parallel, then $g(J_1,J_2)'=g(J_1,J_2')=g(J_1',J_2)=0$. In particular, if $J_1$ is orthogonal to  $J_2$ at one point, they are orthogonal everywhere. Applying this to the parallel Jacobi field $a X_2+b Y_2$ it follows that it is orthogonal to
$ X_2+ Y_2$ everywhere since it is at $t=0$ by construction. Near $t=L$ we have $ (X_2^*+ Y_2^*)(c(L-t))=(L-t)( X_2+ Y_2)$ and hence $X$ is orthogonal to $X_2+ Y_2\in\fp$. But  the only vector in $\spam\{X_2,Y_2\}$  orthogonal to $ X_2+ Y_2$ is $ X_2-Y_2$   due to $K_+$ invariance. This proves our claim that $ X_2-Y_2$ is parallel.
\end{proof}

We can now finish the proof of the Theorem  in the Introduction. By \lref{parallel} we have that
$\ml X_2-Y_2,X_2-Y_2\mr=f_2-2h_2+g_2$ is constant. On the other hand, by \lref{smooth} (b),   the functions $f_2$ and $g_2$ are even near $t=0$ and $h_2$ is odd, and thus $h_2=0$. Now notice that this property in fact holds for all $t\in [-L,L]$. Indeed, the geodesic $c$ is a minimizing geodesic from the singular orbit at $t=0$ to the one at $t=L$. Since $G$ acts transitively on the orbits, and $K_-$ transitively on the unit sphere in $D_-$, it follows that the normal exponential map of the singular orbit $G/K_-$ is a diffeomorphism on a tubular neighborhood of radius $L$.   Thus the slice in the proof of the smoothness conditions can be defined on a ball of radius $L$.  Altogether, this implies that  $h_2(t)=0$ for $t\in [0,L)$. But then $\ml X_2+Y_2,X_2+Y_2\mr=f_2+g_2=\ml X_2-Y_2,X_2-Y_2\mr$ is constant as well, which is a contradiction since $X_2+Y_2$ must vanish at $t=L$.

 \bigskip

 \begin{rem*}
(a) For the second family of cohomogeneity one manifolds $E_{p,q}^*$  the principal isotropy group $H$ is trivial and hence, at  $t=0$,  $K_-$ acts as $R(\theta)$ on $D_-$. But this implies that $h_2+h_3=t^{2|p-q|}\phi_1(t^2)$ and $h_2-h_3=t^{2|p+q|}\phi_1(t^2)$, see the proof of \lref{smooth}. Hence $h_2$ is even and the last part of the above proof breaks down. Notice though that the methods imply that the metric is  restricted when it has nonnegative curvature: the fields $X_2^*-Y_2^*$ and $X_3^*-Y_3^*$ must be parallel.

(b)
Another difference between $E_{p,q}$ and $E_{p,q}^*$ is that in the latter case the Weyl group element $w_- $ is one of $ (\pm 1,\pm 1 )$ and hence  acts as $\Id$ on $\fp\oplus\fm$. Furthermore, $W\simeq\Z_2$, and hence $c$ has length $2L$.

(c) We finally observe that at least one of the spaces $E_{p,q}^*$ carries a metric with nonnegative curvature. Indeed,  we have an action by $\Sp(1)\Sp(1)$ on $\Sph^3\times\CP^2$   where $(q_1,q_2)\in \Sp(1)\Sp(1)$ acts via $(r,p)\to (q_1rq_2^{-1},q_2p)$. Here $r\in \Sph^3\simeq \Sp(1)$ and $p\in\CP^2$ with $q_2$ acting linearly via $\Sp(1)=\SU(2)\subset \SU(3)$. One now easily checks that the group diagram of this action is that of $E_{1,1}^*$, and hence they are equivariantly diffeomorphic.
 \end{rem*}

\bigskip

 \providecommand{\bysame}{\leavevmode\hbox
to3em{\hrulefill}\thinspace}

\end{document}